\documentclass[final,reqno,twoside,a4paper,11pt]{amsart}
\usepackage{mltools}
\usepackage{mlmath62,mlthm10-REAR,mllocal}
\usepackage{upref,amsmath,amssymb,amsthm,mathrsfs}
\usepackage{tikz,graphics,latexsym}
\usepackage{tikz-cd}
\usepackage{times}
\usepackage{amsmath}
\usepackage{amssymb}
\usepackage{amsthm}
\usepackage{verbatim}
\usepackage{amsmath}
\usepackage{amsfonts}
\usepackage{amssymb}
\usepackage[all]{xy}
\usepackage{xcolor}
\usepackage[utf8]{inputenc}
\usepackage{graphicx}
\usepackage{xfrac}

\usepackage[colorlinks=true,linkcolor=blue,citecolor=blue]{hyperref}

\usepackage[scaled]{helvet} 
\usepackage{courier} 
\usepackage[mathbf]{euler}
\usepackage{mllocal} 

\newcommand{\Sing}{\operatorname{Sing}}
\usepackage{pgfplots}

\numberwithin{equation}{section}

\begin{document}

\title[The Euler obstruction of a $1$-form on a determinantal singularity]{The Euler obstruction of a $1$-form on a determinantal singularity}

\author{Anne Frühbis-Krüger}
\address{Institut für Mathematik, Carl-von-Ossietzky Universität, Germany}
\email{anne.fruehbis-krueger@uol.de}

\author{Hellen Santana}
\address{Departamento de Matem\'atica, 
	Universidade Federal de S\~ao Carlos (UFSCar),
	Brazil}
\email{hellen@ufscar.br}

\thanks{Anne Frühbis-Krüger is partially supported by DFG under TRR 195 "Symbolic Tools in Mathematics and their application'', Hellen Santana is partially supported by CAPES-PROBAL under grant $88887.180502/2025-00$.}

\subjclass[2020]{32S05;32S30;32S50}

\keywords{Determinantal singularities, Poincaré-Hopf-Nash index Euler obstruction, Morse critical points, $1$-forms}

\begin{abstract}
In this work, we investigate the connections between the local Euler obstruction and the Poincaré-Hopf-Nash (PHN) index of a $1$-form in the setting of determinantal singularities. As an application, we provide explicit computations of the Euler obstruction of a function with a stratified isolated singularity at the origin defined on an IDS with rigid singularities.
\end{abstract}

\maketitle

\tableofcontents

\section{Introduction}

The study of vector fields and $1$-forms on singular varieties is both classical and highly active in singularity theory, often revealing crucial information about the underlying variety. For a smooth complex manifold, the Poincaré-Hopf theorem relates the sum of the indices of a vector field (or a $1$-form) with isolated zeros to the Euler characteristic of the manifold \cite{milnor1997topology}. However, in the presence of singularities of the underlying variety, the classical notion of the index is no longer well-defined, and several generalizations have been proposed to capture the local topological and geometric data \cite{bonatti1994index,brasselet2010euler,BLS,Brasselet2009,dutertre2010radial,Ebeling2023,ruas2014codimension,nuno2013vanishing,nuno2018erratum,seade2007indices}.

Among the most fundamental generalizations are the \textit{radial index} and the \textit{local Euler obstruction of a 1-form}, extensively studied by Ebeling and Gusein-Zade (see \cite{ebeling2005radial,ebeling2015equivariant,chernobstructions,gusein2018index}). These invariants are constructed using the Whitney stratification of the singular space and the Nash transformation, providing deep insights into the behavior of a $1$-form near an isolated singular point on the variety. 

While these concepts apply to general analytic varieties, focusing on specific classes of singularities allows for a more detailed study, often revealing richer combinatorial and geometric structures. Beyond hypersurfaces and complete intersections, a prominent class is that of determinantal singularities, which are defined by the loci where the rank of a matrix with holomorphic entry functions drops below a given threshold. Within this context, Essentially Isolated Determinantal Singularities (EIDS) form a particularly well-behaved subclass, admitting a canonical deformation known as an \textit{essential smoothing}. However, unlike the case of isolated complete intersection singularities (ICIS), the essential smoothing of an EIDS is not necessarily a smooth manifold, making the study of $1$-forms on these spaces intrinsically more complex.

To address the problem of indices of $1$-forms on determinantal singularities, the \textit{Poincaré-Hopf-Nash index} (PHN-index) was introduced in \cite{eg-determinantal}. This index is defined as an obstruction to extending the dual Nash bundle in a particular setting, see Definition \ref{PHN-index definition}.

In this work, we review local Euler obstruction and the Poincaré-Hopf-Nash index of a $1$-form and explore connections between these invariants in the setting of determinantal singularities. Figure \ref{figure} illustrates this connection in a simple example: considering a $1$-form over the cone $X,$ the Euler obstruction of $\omega$ at the origin, the only singularity of $X,$ can be determined as the number of singular points of $\omega$ on $X\setminus\{0\}$ (blue dots in the picture). Passing to an essential smoothing of $X,$ the original singular points of $\omega$ on the non-singular part of $X$ persist. However, the singular point may give rise to new singular points (shown as red points) of $\omega|_{\tilde{X}}$. The number of such newly created singular points may depend on the smoothing. 

\begin{center}
\begin{figure}[htbp]\label{figure}
    \centering
    \begin{minipage}{0.43\textwidth}
        \centering
        \includegraphics[width=\linewidth]{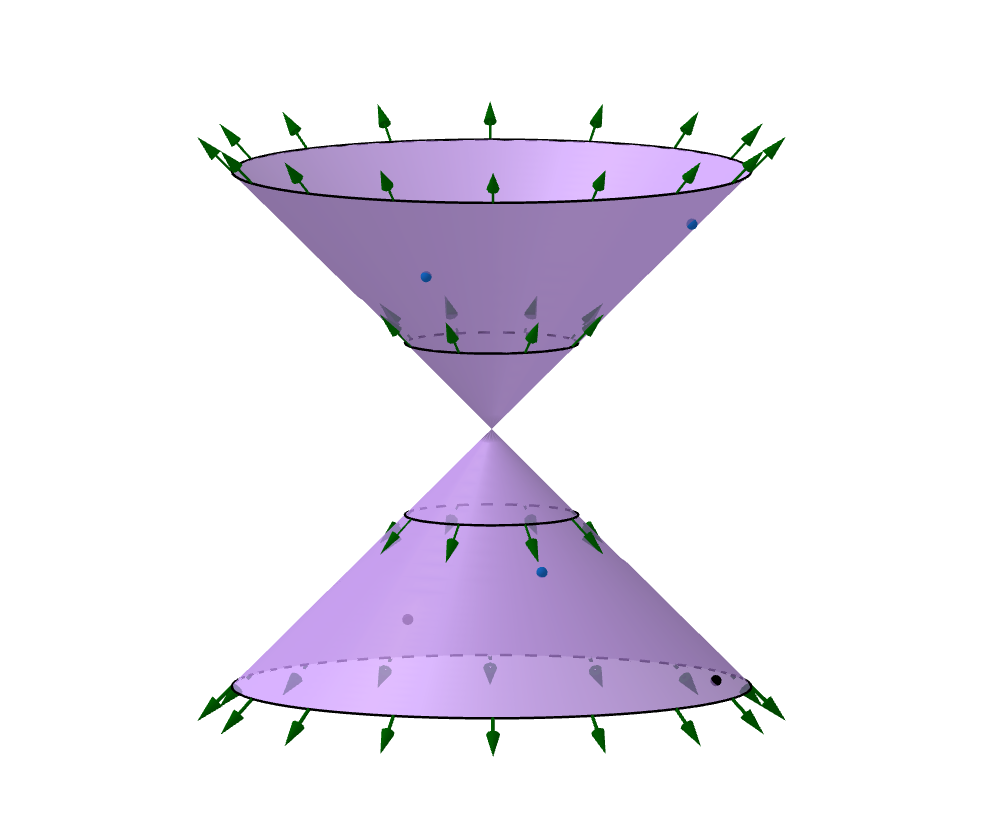}
    \end{minipage}
    \hfill 
    \begin{minipage}{0.56\textwidth}
        \centering
        \includegraphics[width=\linewidth]{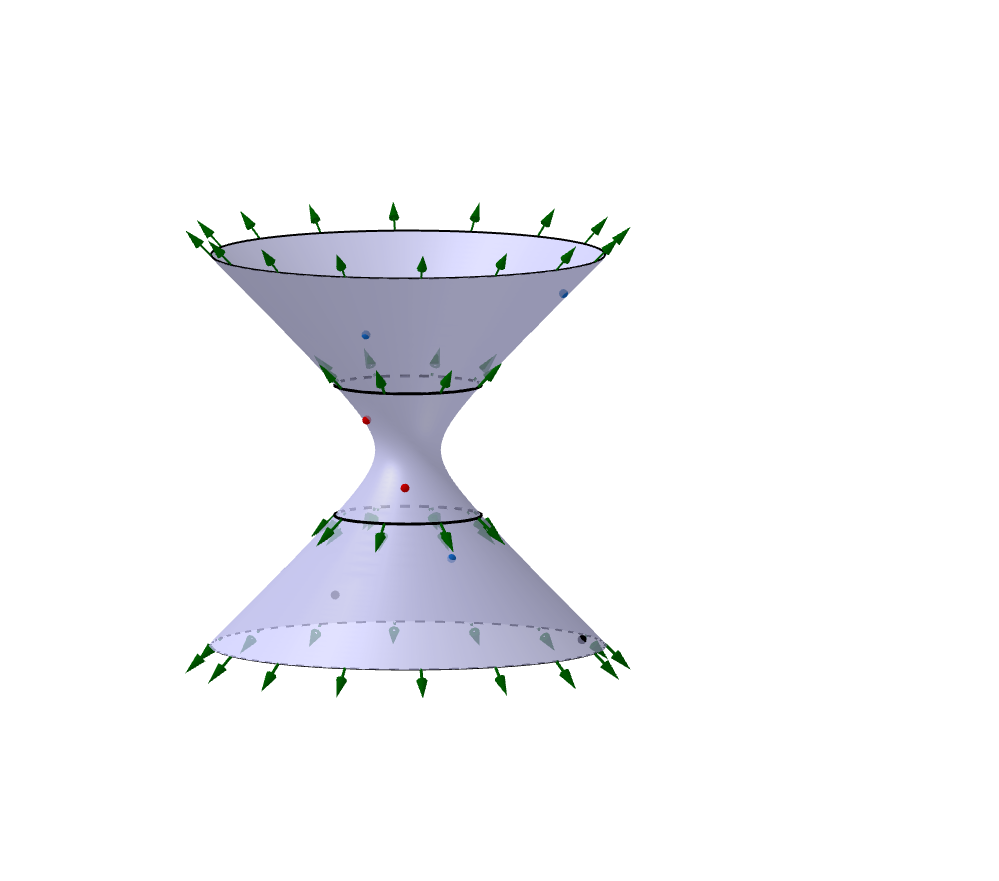}
    \end{minipage}
    
    \caption{$1$-form over the cone $X$ (left) and $1$-form over an essential smoothing $\tilde{X}$ of $X$ (right).}
    \label{fig:comparison_cones}
\end{figure}
\end{center}

Our main result shows that the Euler obstruction of $\omega$ at the origin equals the difference between the PHN-index of $\omega$ and  the PHN-index of a generic linear form $dl.$ This can be seen as adjusting the PHN-index of $\omega$ by measuring the contribution of the smoothing as the PHN-index of $dl.$ The paper is organized as follows. In Section 2, we recall the definitions and main properties of the radial index and the local Euler obstruction of a $1$-form on general singular varieties. We then specialize to the framework of determinantal singularities and essential smoothings. In Section 3, we relate the local Euler obstruction of a $1$-form with the Poincaré-Hopf-Nash index of the $1$-form defined on an EIDS, providing examples in which we compute these numbers. As an application, we compute the Euler obstruction of a function with a stratified isolated singularity at the origin defined on an EIDS. In Section 4, we analyze the developed formulas in the context of a smoothable IDS.

\vspace{0.5cm}

\section{Basic notions and known results}

Before stating and proving our result, we recall the radial and Poincaré-Hopf-Nash indices and the Euler obstruction of a $1$-form and their properties. 

\subsection{The Radial index of a 1-form}

We refer to \cite{ebeling2005radial} fo the complete description of the radial index.  Let $(X, 0) \subset (\mathbb{R}^N, 0)$ be a germ of a real analytic variety, and let $\omega$ be a (continuous) $1$-form on a neighborhood of the origin in $\mathbb{R}^N$.

\vspace{0.3cm}
\begin{definition}
    The $1$-form $\omega$ is \textit{radial} on $(X, 0)$ if, for an arbitrary nontrivial analytic arc $\varphi : (\mathbb{R}, 0) \to (X, 0)$ on $(X, 0)$, the value of the $1$-form on the tangent vector $\dot{\varphi}(t)$ is positive for sufficiently small positive $t$.
\end{definition} 
\vspace{0.3cm}

Let $X = \bigcup_{i=0}^q V_i$ be a Whitney stratification of a germ $(X, 0)$, with $V_0 = \{0\}$. Let $\omega$ be an (arbitrary continuous) $1$-form on a neighborhood of the origin in $\mathbb{R}^N$ with an isolated singular point \footnote{A singular point of a $1$-form is a zero of that $1$-form.} on $(X, 0)$ at the origin. Let $\epsilon > 0$ be small enough so that in the closed ball $B_\epsilon$ of radius $\epsilon$ centered at the origin in $\mathbb{R}^N$, the $1$-form $\omega$ has no singular points on $X \setminus \{0\}$. There exists a $1$-form $\tilde{\omega}$ on $\mathbb{R}^N$ such that:
\begin{enumerate}
    \item[(1)] the $1$-form $\tilde{\omega}$ coincides with the $1$-form $\omega$ on a neighborhood of the sphere $S_\epsilon = \partial B_\epsilon$;
    \item[(2)] the $1$-form $\tilde{\omega}$ is radial on $(X, 0)$ at the origin;
    \item[(3)] in a neighborhood of each singular point $x_0 \in (X \cap B_\epsilon) \setminus \{0\}$, where $x_0 \in V_i$ and $\dim V_i = k$, there exists a (local) analytic diffeomorphism $h : (\mathbb{R}^N, \mathbb{R}^k, 0) \to (\mathbb{R}^N, V_i, x_0)$ such that $h^*\tilde{\omega} = \pi_1^*\tilde{\omega}_1 + \pi_2^*\tilde{\omega}_2$, where $\tilde{\omega}_1$ is a germ of a $1$-form on $(\mathbb{R}^k, 0)$ with an isolated singular point at the origin, $\tilde{\omega}_2$ is a radial $1$-form on $(\mathbb{R}^{N-k}, 0)$, and $\pi_1$ and $\pi_2$ are the natural projections $\pi_1 : \mathbb{R}^N \to \mathbb{R}^k$ and $\pi_2 : \mathbb{R}^N \to \mathbb{R}^{N-k}$, respectively.
\end{enumerate}

\begin{definition}
    The radial index $\mathrm{ind}_{rad}(\omega, X,0)$ of the $1$-form $\omega$ on the variety $X$ at the origin is the sum
\[
\mathrm{ind}_{rad}(\omega, X,0) = 1 + \sum_{i=1}^q \sum_{Q \in \Sing \tilde{\omega} \cap V_i} \mathrm{ind}_{rad}(\tilde{\omega},V_i,Q),
\]
where the sum is taken over all singular points of the $1$-form $\tilde{\omega}$ on $(X \setminus \{0\}) \cap B_\epsilon$.
\end{definition} 

Now let $(X, 0) \subset (\mathbb{C}^N, 0)$ be a germ of a complex analytic variety of pure dimension $n$ and let $\omega$ be a complex continuous $1$-form on a neighborhood of the origin in $\mathbb{C}^N$. There is a one-to-one correspondence between complex $1$-forms on a complex manifold $M^n$ and real $1$-forms on it, considered as a real $2n$-dimensional manifold (see \cite{ebeling2005radial}), and this explains the following definition.

\begin{definition}
   Let $Re(\omega)$ be the real component of $\omega$ on $X.$ The (\textit{complex radial}) \textit{index} $\text{ind}_{X,0}^{\mathbb{C}} \omega$ of the complex $1$-form $\omega$ on $X$ at the origin is defined by
\[
\mathrm{ind}_{rad}^{\mathbb{C}}(\omega;X,0) = (-1)^n \mathrm{ind}_{rad}(Re(\omega);X,0).
\]
\end{definition}

From now on, by radial index we always mean complex radial index, and we will omit the upper index $\mathbb{C}.$

\subsection{The local Euler obstruction of a 1-form}\label{Euler onstruction of a 1-Form}

In \cite{ebeling2005radial}, the authors introduced the notion of the Euler obstruction $Eu_{X,0}\omega$ of a 1-form.

Let $(X, 0) \subset (\mathbb{C}^N, 0)$ be a germ of a complex analytic variety with a Whitney stratification $X = \bigcup_{i=0}^q V_i$, with $V_0 = \{0\}$, and let $\omega$ be a $1$-form on a neighborhood of the origin in $\mathbb{C}^N$ with an isolated singular point on $X$ at the origin. Let $\epsilon > 0$ be small enough such that the $1$-form $\omega$ has no singular points on $X \setminus \{0\}$ inside the ball $B_\epsilon$. Let $\nu : \hat{X} \to X$ be the Nash transformation of the variety $X$ and $\hat{T}$ be the Nash bundle over $\hat{X}$, given by the pullback of the tautological bundle on the Grassmann manifold $G(n, N)$.  The $1$-form $\omega$ gives rise to a section $\hat{\omega}$ of the dual Nash bundle $\hat{T}^*$ over the Nash transform $\hat{X}$ without singular points outside the preimage of the origin.

\begin{definition}\label{Eu definition}
The \textit{Euler obstruction} $Eu_{X,0} \omega$ of the $1$-form $\omega$ on $X$ at the origin is the obstruction to extending the nonzero section $\hat{\omega}$ from the preimage of a neighborhood of the sphere $S_\epsilon = \partial B_\epsilon$ to the preimage of its interior. More precisely, it is the evaluation of the obstruction cocycle in $H^{2n}(\nu^{-1}(X \cap B_\epsilon), \nu^{-1}(X \cap S_\epsilon))$ on the fundamental class of the pair $(\nu^{-1}(X \cap B_\epsilon), \nu^{-1}(X \cap S_\epsilon))$.
\end{definition}

For two strata $V_i$ and $V_j$ such that $V_i \subset \overline{V_j}$ and $V_i \neq V_j$, let $N_{ij}$ be the normal slice of the variety $V_j$ to the stratum $V_i$ at a point of it, and let $n_{ij}$ be the index of a generic (nonvanishing) $1$-form $dl$ on $N_{ij}$: $n_{ij} = (-1)^{\dim N_{ij} - 1} \bar{\chi}(M_{l|_{N_{ij}}})$, where $M_{l|_{N_{ij}}}$ is the Milnor fibre of a linear function $l$ on $N_{ij}$ and $n_{ii} = 1$. Let $m_{ij}$ be the (Möbius) inverse of the function $n_{ij}$ on the partially ordered set of strata (see \cite{ebeling2005radial, hall1998combinatorial}), i.e.
\[
\sum_{i \le j \le k} n_{ij} m_{jk} = \delta_{ik}
\]

\begin{proposition}\cite[Corollary 1]{ebeling2005radial}\label{Euradial}
    One has $Eu_{X,0} \omega =\displaystyle \sum_{i=0}^q m_{iq} \cdot \mathrm{ind}_{rad}(\omega,\overline{V_i},0)$.
\end{proposition}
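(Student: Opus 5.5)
We prove the formula by comparing the two invariants on every stratum closure and then inverting a triangular system. Write $V_q$ for the top-dimensional (regular) stratum, and let $\overline{V_i}$ be the closures containing the origin. The key fact is the standard relation between the radial index and the Euler obstruction of a $1$-form, stated for an arbitrary stratified closure:
\[
\mathrm{ind}_{rad}(\omega,\overline{V_k},0)=\sum_{i\le k} n_{ik}\, Eu_{\overline{V_i},0}\,\omega ,
\]
where $Eu_{\overline{V_0},0}\omega=Eu_{\{0\},0}\omega=1$. Once this holds for every $k$, apply the Möbius inversion $\sum_{i\le j\le k} n_{ij}m_{jk}=\delta_{ik}$. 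Multiply by $m_{kq}$ and sum over $k$:
\[
\sum_k m_{kq}\,\mathrm{ind}_{rad}(\omega,\overline{V_k},0)=\sum_i Eu_{\overline{V_i},0}\omega \sum_{i\le k\le q} n_{ik}m_{kq}=Eu_{X,0}\omega .
\]
Since $\overline{V_q}=X$, this is the claimed formula. So the whole argument reduces to the displayed relation, together with the fact that $n_{ik}$ depends only on the pair of strata, which holds by Whitney conditions.

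To prove the relation on $\overline{V_k}$, deform $\omega$ to the form $\tilde\omega$ of the radial-index construction. We may assume $\tilde\omega$ agrees with $\omega$ near $S_\epsilon$, is radial at $0$, and near each singular point $Q\in V_j$ splits as a form on the stratum plus a radial normal form.

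\begin{itemize}
\item \emph{Additivity of the obstruction.} The obstruction for $Eu_{\overline{V_k},0}\omega$ equals the sum of local Euler obstructions of $\tilde\omega$ at $0$ and at the points $Q$. This is standard obstruction-theory additivity on the Nash transform, since the section $\hat\omega$ is unchanged near the boundary.
\item \emph{The point $0$.} At $0$, $\tilde\omega$ is radial. The Euler obstruction of a radial form is the obstruction of a generic $dl$ up to sign. I would prove the needed identity by induction on dimension through the same relation applied to links, which also yields $\mathrm{ind}_{rad}(dl)=1+\cdots$.
\item \emph{The points $Q$.} At $Q\in V_j$, the product structure gives a local Euler obstruction equal to $\mathrm{ind}(\tilde\omega_1,V_j,Q)\cdot Eu$ of the radial form on the normal slice $N_{jk}$.
\end{itemize}

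Summing these contributions and regrouping by strata produces the relation. The regrouping uses the inductive hypothesis applied to lower closures $\overline{V_j}$, where $\sum_Q \mathrm{ind}(\tilde\omega_1,V_j,Q)$ is tied to $\mathrm{ind}_{rad}(\omega,\overline{V_j},0)$.

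The main obstacle is the bookkeeping in this regrouping. Two things must be matched exactly: the Euler obstruction of a radial normal form with $n_{jk}=(-1)^{\dim N-1}\bar\chi(M_l)$, using the sign convention $(-1)^n$ in the complex radial index, and the contributions at points of intermediate strata. I would first check the relation on an isolated singularity, with two strata $\{0\}$ and $X\setminus\{0\}$. In that case it reads $\mathrm{ind}_{rad}=Eu_{X,0}\omega+n_{0q}$, which matches the known formula $Eu_{X,0}\omega=\mathrm{ind}_{rad}(\omega)-\mathrm{ind}_{rad}(dl)$. That check fixes the signs, after which I would run the induction on the number of strata.
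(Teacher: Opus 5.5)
Your outer structure is sound and matches the cited source (the paper gives no proof of its own and only quotes the result). Ebeling and Gusein-Zade prove $\mathrm{ind}_{rad}(\omega,\overline{V_k},0)=\sum_{i\le k}n_{ik}\,\mathrm{Eu}_{\overline{V_i},0}\,\omega$ and then obtain the statement by exactly your M\"obius inversion. The gap is in your proof of that relation, at the origin.

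Your claim that the Euler obstruction of a radial form is, up to sign, that of a generic $d\ell$ is false. For generic $\ell$, no limit of tangent spaces at $0$ lies in $\ker\ell$, so $\mathrm{Eu}_{\overline{V_k},0}\,d\ell=0$; this is the fact used in the proof of Theorem~\ref{main theorem}. A radial form such as $\sum_i\bar z_i\,dz_i$, by contrast, has Euler obstruction $(-1)^{\dim\overline{V_k}}\,\mathrm{Eu}(\overline{V_k},0)$, where $\mathrm{Eu}(\overline{V_k},0)$ is MacPherson's local Euler obstruction. This is already $\pm1$ at a smooth point.

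With the correct value, grant your treatment of the points $Q$. There the coefficient matching for $Q\in V_j$ is the relation for a radial form on the lower-dimensional normal slice $N_{jk}$, so your induction should run over normal slices, not over the closures $\overline{V_j}$. What then remains at the origin is
\[
(-1)^{\dim\overline{V_k}}=\sum_{i\le k}n_{ik}\,(-1)^{\dim\overline{V_i}}\,\mathrm{Eu}(\overline{V_i},0).
\]
This is the relation itself for a radial form on the same space $\overline{V_k}$, i.e.\ a Brasselet--L\^e--Seade type formula; for an isolated singularity it reads $\mathrm{Eu}(X,0)=\chi(M_\ell)$. It does not live in lower dimension, so induction on dimension cannot produce it. Your argument is therefore circular exactly at the step that carries the content.

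The missing idea is to use $d\ell$, not a radial form, as the reference form. For $\omega=d\ell$ the relation is immediate:
\begin{itemize}
\item The right side is $n_{0k}$, since $\mathrm{Eu}_{\overline{V_0},0}\,d\ell=1$ and $\mathrm{Eu}_{\overline{V_i},0}\,d\ell=0$ for $i\ge1$.
\item The left side is $\mathrm{ind}_{rad}(d\ell,\overline{V_k},0)=n_{0k}$, because the normal slice to the point stratum is $\overline{V_k}$ itself.
\end{itemize}
From there you have two options:
\begin{itemize}
\item Run your own decomposition on $d\ell$: deform it to a form radial at $0$ and match the $Q$-terms by induction. This yields precisely the displayed identity.
\item Alternatively, show that the difference of the two sides obeys the law of conservation of number with no contributions at points of positive-dimensional strata. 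It is then constant along a complex one-parameter family of forms with isolated singular point joining $\omega$ to $d\ell$, and hence is $0$.
\end{itemize}
Your own sanity check $\mathrm{Eu}_{X,0}\,\omega=\mathrm{ind}_{rad}(\omega)-\mathrm{ind}_{rad}(d\ell)$ is exactly this argument in the two-strata case.

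Finally, your product formula at $Q\in V_j$ still needs a Brasselet--Schwartz type justification. Whitney regularity gives only a topological product structure, and the Nash bundle does not split near $Q$.
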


\subsection{Determinantal singularities}

Let $(M_{m,n}^t, 0) \subset (\text{Mat}(m, n; \mathbb{C}), 0)$ be the generic determinantal variety of type $(m, n, t)$:
\[
M_{m,n}^t = \{M \in \text{Mat}(m, n; \mathbb{C}) : \text{rank } M < t\}.
\]
The canonical rank stratification by
\[
S_{m,n}^s = M_{m,n}^s \setminus M_{m,n}^{s-1}
\]
for $0 < s \le \min\{m, n\} + 1$ is a Whitney stratification of $\text{Mat}(m, n; \mathbb{C})$ and $M_{m,n}^t$.

\begin{definition}(\cite{eg-determinantal}). A determinantal singularity of type $(m, n, t)$ is given by a holomorphic map germ
\[
F: (\mathbb{C}^N, 0) \to (\text{Mat}(m, n; \mathbb{C}), 0)
\]
such that the space
\[
(X, 0) := (F^{-1}(M_{m,n}^t), 0) \subset (\mathbb{C}^N, 0)
\]
has \textit{expected codimension} $\text{codim}(X, 0) = \text{codim } M_{m,n}^t = (m - t + 1)(n - t + 1)$.
    
\end{definition} 

A determinantal singularity $(X, 0)$ given by a matrix $F$ is called \textit{essentially isolated} if the map $F$ is transverse to the rank stratification of $\text{Mat}(m, n; \mathbb{C})$ in a punctured neighborhood of the origin.

\begin{definition}\label{essential smoothing} An \textit{essential smoothing} of $(X,0)$ is a flat family
\[
\begin{array}{ccc} 
X & \longhookrightarrow & \mathcal{X} \\ 
\downarrow & & \downarrow u \\ 
\{0\} & \longrightarrow & \mathbb{C} 
\end{array}
\]
coming from a stabilization
\[
\tilde{F} : (\mathbb{C}^N, 0) \times (\mathbb{C}, 0) \to (\operatorname{Mat}(m, n; \mathbb{C}), 0) \times (\mathbb{C}, 0)
\]
of the map $F$. That is, $\tilde{F} = \tilde{F}(x, u) = (\tilde{F}_u(x), u)$ with $\tilde{F}_0 = F$ and $\tilde{F}_u$ transversal to $M_{m,n}^t$ for all sufficiently small $u \neq 0$. Then, the total space of the family above appears as $\mathcal{X} = \tilde{F}^{-1}(M_{m,n}^t \times \mathbb{C})$ and $u$ is the map given by the deformation parameter.
\end{definition}

From now on, by essential smoothing $\tilde{X}$ of $X$, we refer to a member of the family described in Definition \ref{essential smoothing}.

A generic deformation $\tilde{F}$ of the map $F$ defines an essential smoothing of the EIDS $(X, 0)$ (according to the Thom transversality theorem). An essential smoothing is, in general, not smooth (for $N \ge (m-t+2)(n-t+2)$). Its singular locus is $\tilde{F}^{-1}(M_{m,n}^{t-1})$; the singular locus of the latter is $\tilde{F}^{-1}(M_{m,n}^{t-2})$, etc. The representation of $\tilde{X}$ as the union
\[
\tilde{X} = \dot{\bigcup}_{1 \le i \le t} \tilde{F}^{-1}(M_{m,n}^i \setminus M_{m,n}^{i-1})
\]
is a Whitney stratification of it. An essential smoothing of an EIDS $(X, 0)$ of type $(m, n, t)$ is a \textit{genuine smoothing} if and only if $N < (m-t+2)(n-t+2)$.

\subsection{Poincaré-Hopf indices of 1-forms on EIDS}
 In \cite{eg-determinantal}, the authors introduce the Poincaré-Hopf-Nash index of a $1$-form as an extension of the Poincaré-Hopf index defined for manifolds and isolated complete intersections to the context of determinantal singularities. We recall its definition.

Let $(X, 0) = F^{-1}(M_{m,n}^t) \subset (\mathbb{C}^N, 0)$ be an EIDS of type $(m, n, t)$, and let $\omega$ be a germ of a (complex) $1$-form on $(\mathbb{C}^N, 0)$ whose restriction to $(X, 0)$ has an isolated singular point at the origin. This means that the restrictions of the $1$-form $\omega$ to each stratum $X_i \setminus X_{i-1}$, for $i \le t$, have no singular points in a punctured neighborhood of the origin. Let $\pi: \hat{M}_{m,n}^t \to M_{m,n}^t$ be the Nash transform of the variety $M_{m,n}^t \subset \text{Mat}(m, n; \mathbb{C})$. We observe that $\hat{M}_{m,n}^t$ is a resolution of the variety $M_{m,n}^t$ (see \cite{eg-determinantal}).

 Let $\hat{M}_{m,n}^t \times_{M_{m,n}^t} \tilde{X}$ be the fibre product of the spaces $\hat{M}_{m,n}^t$ and $\tilde{X}$ over the variety $M_{m,n}^t$.

The following diagram illustrates the full setting we will work on.

\[
\begin{tikzcd}
\hat{\mathcal{T}} \arrow[d] & & \\
\hat{M}_{m,n}^t \times_{M_{m,n}^t} \tilde{X} \arrow[d,"\Pi"'] \arrow[r] & \hat{M}_{m,n}^t \arrow[d, "\pi"] & \\
\tilde{X} \arrow[d, "j"'] \arrow[r, "\tilde{F}"] & M_{m,n}^t & \\
\mathbb{C}^N & X \arrow[l, hook', "j"] \arrow[u, "F"'] & \overline{X} \arrow[l,"\nu"] & \overline{\mathcal{T}} \arrow[l]
\end{tikzcd}
\]

\vspace{0,5cm}

The (smooth) complex analytic variety $\hat{M}_{m,n}^t \times_{M_{m,n}^t} \tilde{X}$ has two important properties: it is a resolution and it is the Nash transform of the variety $\tilde{X}$ (see \cite{eg-determinantal}). 
Let $j:\tilde{X} \hookrightarrow U \subset \mathbb{C}^N$ be the inclusion map. The lifting $\omega := (j \circ \Pi)^* \omega$ of the $1$-form $\omega$ is a $1$-form on a (nonsingular) complex analytic manifold $\overline{X}$ without singular points outside the preimage of a small neighborhood of the origin. 
Hence the $1$-form $\omega$ defines a nonvanishing section $\hat{\omega}$ of the dual bundle $\hat{\mathcal{T}}^*$ over the preimage of the intersection $\tilde{X} \cap S_\eps$ of the variety $\tilde{X}$ with the sphere $S_\eps$.

\begin{definition}\label{PHN-index definition}
The \textit{Poincaré-Hopf-Nash index} (PHN-index) $\mathrm{ind}_{PHN}(\omega, X, 0)$ of the $1$-form $\omega$ on the EIDS $(X, 0)$ is the obstruction to extending the nonzero section $\hat{\omega}$ of the dual Nash bundle $\hat{\mathcal{T}}^*$ from the preimage of the boundary $S_\eps = \partial B_\eps$ of the ball $B_\eps$ to the preimage of its interior, i.e., to the manifold $\hat{M}_{m,n}^t \times_{M_{m,n}^t} \tilde{X}$. 

\end{definition}

A geometric interpretation for the PHN-index is as follows.

\begin{proposition}\cite{eg-determinantal}\label{caracterization PHN}
The PHN-index $\mathrm{ind}_{PHN}(\omega, X, 0)$ of the $1$-form $\omega$ on the EIDS $(X, 0)$ is equal to the number of nondegenerate singular points of a generic deformation $\tilde{\omega}$ of the $1$-form $\omega$ on the nonsingular part $\tilde{X}_{reg} = \tilde{F}^{-1}(M_{m,n}^t \setminus M_{m,n}^{t-1})$ of an essential smoothing $\tilde{X}$ of the singularity $(X, 0)$.
\end{proposition}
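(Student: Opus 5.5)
The plan is to read the PHN-index as a relative Euler class on the smooth manifold $\hat X:=\hat{M}_{m,n}^t \times_{M_{m,n}^t} \tilde{X}$ (restricted over $\tilde X\cap B_\eps$). Then a generic perturbation of the section $\hat\omega$ makes the obstruction a count of transverse zeros, and genericity pushes all of those zeros into the part of $\hat X$ lying over $\tilde X_{reg}$. By Definition \ref{PHN-index definition}, $\mathrm{ind}_{PHN}(\omega,X,0)$ is the obstruction to extending a nonvanishing section of the rank $n$ complex bundle $\hat{\mathcal T}^*$ from $\Pi^{-1}(\tilde X\cap S_\eps)$ to $\Pi^{-1}(\tilde X\cap B_\eps)$, where $n=\dim \tilde X$. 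Since $\hat X$ is a complex manifold of dimension $n$, standard obstruction theory gives the following: for any extension $\sigma$ of $\hat\omega|_{\Pi^{-1}(\tilde X\cap S_\eps)}$ with isolated zeros, the obstruction equals the sum of the local indices of the zeros of $\sigma$.

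Step 1 (choice of the deformation). I take $\tilde\omega=\omega+\sum_{k=1}^N a_k(x)\,dx_k$, where the $a_k$ are small holomorphic functions, or small constants cut off by a bump function that vanishes near $S_\eps$. Then $\tilde\omega$ coincides with $\omega$ near the boundary, and $\tilde\sigma:=(j\circ\Pi)^*\tilde\omega$ is an admissible extension. The key observation is that the covectors $dx_1,\dots,dx_N$, restricted to any $n$-plane $T\subset\mathbb C^N$, span $T^*$. Hence the family of sections $\{(j\circ\Pi)^*(\omega+\sum a_k dx_k)\}_{a\in\mathbb C^N}$ is a submersion onto the total space of $\hat{\mathcal T}^*$ at every point of $\hat X$. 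Parametric transversality then says that for $a$ outside a set of measure zero, $\tilde\sigma$ is transverse to the zero section.

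Step 2 (no zeros over the singular locus). The set $E:=\Pi^{-1}(\tilde F^{-1}(M^{t-1}_{m,n}))$ is a proper analytic subset of $\hat X$, of complex dimension $<n$. The zero section has real codimension $2n$ in $\hat{\mathcal T}^*$. Applying the parametric transversality to $\tilde\sigma|_E$ gives, for generic $a$, that $\tilde\sigma$ has no zeros on $E$. I expect this to be the main point needing care. One has to check that the submersion property of Step 1 holds on every stratum of $E$, which is true because it holds pointwise on all of $\hat X$. One must also ensure that the perturbation is small enough that no zeros escape through the boundary, which follows from compactness and the fact that $\hat\omega$ is nonvanishing near $\Pi^{-1}(\tilde X\cap S_\eps)$.

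Step 3 (counting). All zeros of $\tilde\sigma$ now lie in $\Pi^{-1}(\tilde X_{reg})$. Over that set $\Pi$ is a biholomorphism and $\hat{\mathcal T}^*$ is the cotangent bundle of $\tilde X_{reg}$. So the zeros of $\tilde\sigma$ correspond exactly to the singular points of $\tilde\omega|_{\tilde X_{reg}}$, and these are nondegenerate by transversality. Near each such point I arrange $\tilde\omega$ to be holomorphic, using the holomorphic choice of the $a_k$ there. A nondegenerate zero of a holomorphic section of a complex bundle has local index $+1$, because complex linear isomorphisms preserve orientation. Summing these contributions gives $\mathrm{ind}_{PHN}(\omega,X,0)=\#\{\text{nondegenerate singular points of }\tilde\omega \text{ on } \tilde X_{reg}\}$. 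Finally, this number does not depend on the generic choice of the deformation, since the obstruction class itself does not.
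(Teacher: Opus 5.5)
Your argument is correct. The paper itself does not prove this proposition; it quotes it from \cite{eg-determinantal}. What you wrote is the standard general-position proof behind it:
\begin{itemize}
\item the PHN-index is the count of zeros of any extension of $\hat\omega$ over the resolution $\hat M^t_{m,n}\times_{M^t_{m,n}}\tilde X$, each weighted by its local index;
\item the family $\omega+\sum_k a_k\,dx_k$ is fibrewise surjective onto $\hat{\mathcal T}^*$, because $dx_1,\dots,dx_N$ restrict onto the dual of every limiting tangent plane;
\item a dimension count keeps the zeros off $E=\Pi^{-1}(\tilde X_{sing})$.
\end{itemize}

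Two points you use implicitly deserve a line each.

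First, the unsigned count in Step 3 needs $\omega$ to be holomorphic. Take the cut-off to be $\equiv 1$ off a collar of $S_\epsilon$ on which $\hat\omega\neq 0$. Then for small $a$ all zeros lie where $\tilde\omega=\omega+\sum_k a_k\,dx_k$ is holomorphic, and $\tilde\omega$ agrees with $\omega$ near the sphere. Your ``small holomorphic $a_k$'' variant does not agree with $\omega$ there, so it would need a homotopy argument instead. For a merely continuous complex $1$-form you must smooth first and count nondegenerate zeros with signs $\pm1$.

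Second, the bound $\dim_{\mathbb C}E<\dim\tilde X$ should be justified. One way: the fibre product is the Nash transform, i.e.\ the closure of the graph of the Gauss map over $\tilde X_{reg}$, so $E$ is nowhere dense. Alternatively, argue directly. Over a point of $\tilde F^{-1}(M^{s}_{m,n}\setminus M^{s-1}_{m,n})$ with $s<t$, the fibre of $\Pi$ consists of an $(n-t+1)$-plane inside the kernel and a $(t-1)$-plane containing the image. It is therefore $G(n-t+1,n-s+1)\times G(t-s,m-s+1)$. Together with transversality of $\tilde F$, this shows that this part of $E$ has dimension $\dim\tilde X-(t-s)^2$.
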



In \cite{eg-determinantal}, the authors provided formulas to relate the radial index and the PHN-index of a $1$-form.
Let $\chi(X, 0) := \chi(\tilde{X} \cap B_\eps)$ for an essential smoothing $\tilde{X}$ of the singularity $(X, 0)$ and $B_{\epsilon}$ a closed ball around the origin. Let us recall that $\tilde{X}_i = \tilde{F}^{-1}(M_{m,n}^i)$, for $i=1, \dots, t$, is an essential smoothing of the EIDS $(X_i, 0)$ (of type $(m, n, i)$).
In the context of determinantal singularities, the normal index associated with the stratification given by the rank has combinatorial formulas. For $i\leq j,$ one has (see \cite{eg-determinantal}) \begin{center}
   $n_{ij}=(-1)^{(m+n)(j-i)}\binom{m - i}{m - j}$ and $m_{ij}=(-1)^{(m+n+1)(j-i)}\binom{m-i}{m-j}$.
\end{center}
 
\begin{proposition}\label{prop4det}\cite[Proposition 5]{eg-determinantal}
The following equation holds:
 \begin{center}
      $ind_{PHN}(\omega;X,0)
    =\displaystyle\sum_{i=1}^{t}m_{it}\big(ind_{rad}(\omega;X_i,0)+(-1)^{\dim X_i}\overline{\chi}(X_i,0)\big).$
 \end{center}
\end{proposition}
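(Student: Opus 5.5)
The plan is to reduce the formula to a counting statement on the essential smoothing, and then invert it with the Möbius function $m_{ij}$.

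\textbf{Setup.} Fix an essential smoothing $\tilde F_u$ and a generic deformation $\tilde\omega$ of $\omega$, and work inside $B_\eps$.
\begin{itemize}
\item For each $k=1,\dots,t$, the $1$-form $\tilde\omega$ has only finitely many singular points on the stratum $\tilde S_k:=\tilde F^{-1}(M^k_{m,n}\setminus M^{k-1}_{m,n})$, and they are nondegenerate there. Let $a_k$ denote their number.
\item By Proposition~\ref{caracterization PHN}, $\mathrm{ind}_{PHN}(\omega;X,0)=a_t$.
\item The strata $\tilde S_k$ with $k\le i$ are exactly the strata of $\tilde X_i=\tilde F^{-1}(M^i_{m,n})$. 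So it suffices to prove, for every $i\le t$,
\[
\mathrm{ind}_{rad}(\omega;X_i,0)+(-1)^{\dim X_i}\overline{\chi}(X_i,0)=\sum_{k=1}^{i} n_{ki}\,a_k .\tag{$\ast$}
\]
\end{itemize}
Multiplying $(\ast)$ by $m_{it}$, summing over $i$, and using $\sum_{k\le i\le t}n_{ki}m_{it}=\delta_{kt}$ then gives exactly $a_t$.

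\textbf{Step 1: local radial index at a singular point of $\tilde\omega$.} Let $Q\in\tilde S_k$ be a nondegenerate singular point of $\tilde\omega|_{\tilde S_k}$, viewed as a point of $\tilde X_i$. Since $\tilde F_u$ is transverse to the rank stratification, near $Q$ the pair $(\tilde X_i,\tilde S_k)$ is locally a product of $\tilde S_k$ with the normal slice of $M^i_{m,n}$ to $S^k_{m,n}$. That slice is again a generic determinantal variety, namely of type $(m-k+1,n-k+1,i-k+1)$.

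I will use the product property built into the definition of the radial index (condition (3)), together with the fact that $\tilde\omega$ is generic in the normal direction, so it behaves like a generic linear form $dl$ there. With the complex sign convention this should give
\[
\mathrm{ind}_{rad}(\tilde\omega;\tilde X_i,Q)=\mathrm{ind}(\tilde\omega|_{\tilde S_k},Q)\cdot n_{ki}=n_{ki}.
\]

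\textbf{Step 2: conservation law.} I need to show that the sum of radial indices of $\tilde\omega$ over the singular points in $\tilde X_i\cap B_\eps$ equals the left side of $(\ast)$.

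\begin{itemize}
\item Pick a $1$-form $\rho$ that is radial near $S_\eps$. Near the sphere, $\tilde X_i$ and $X_i$ are isotopic, and $\omega$, $\tilde\omega$ coincide there. Hence the quantity
\[
D:=\sum \mathrm{ind}_{rad}(\omega\text{-type form})-\sum \mathrm{ind}_{rad}(\rho\text{-type form})
\]
depends only on the boundary data. This uses additivity of the radial index under deformation, as in \cite{ebeling2005radial}.
\item On $X_i$, $\rho$ can be taken radial at $0$, so its complex index is $(-1)^{\dim X_i}$. This gives $\mathrm{ind}_{rad}(\omega;X_i,0)=(-1)^{\dim X_i}+D$.
\item On $\tilde X_i\cap B_\eps$, the stratified Poincaré–Hopf theorem for radial indices gives total index $(-1)^{\dim X_i}\chi(\tilde X_i\cap B_\eps)$ for $\rho$.
\end{itemize}
Subtracting, the sum for $\tilde\omega$ equals
\[
\mathrm{ind}_{rad}(\omega;X_i,0)+(-1)^{\dim X_i}\bigl(\chi(\tilde X_i\cap B_\eps)-1\bigr),
\]
which is the left side of $(\ast)$.

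Combining Steps 1 and 2 proves $(\ast)$, and Möbius inversion finishes the proof.

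\textbf{Main obstacle.} I expect Step 1 to be the delicate part. One must check that the signs match: the factor $(-1)^n$ in the complex radial index, combined with the local product decomposition, has to yield exactly $n_{ki}=(-1)^{(m+n)(i-k)}\binom{m-k}{m-i}$ and not its negative. I would first verify this in the lowest case, $k=i-1$ with a one-step normal slice. There the slice is the cone over a Segre-type variety, and $n_{ki}$ can be computed directly from the Milnor fibre of $l$.
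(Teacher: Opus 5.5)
Your proposal is correct and follows the same route as the source. The paper does not reprove this statement; it cites \cite[Proposition 5]{eg-determinantal}, where the result is obtained, as you do, by M\"obius-inverting the direct relation $(\ast)$. That relation comes from conservation of number for the radial index on the essential smoothing, with each nondegenerate zero on $\tilde S_k$ contributing $n_{ki}$, and $(\ast)$ is exactly the form the paper itself uses in the proof of Lemma~\ref{Lemma Eu}.

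The sign worry in Step 1 is not a real obstacle, though condition (3) of the definition only covers a radial normal component. What you actually need is the standard product formula for the radial index with a generic, non-radial normal component. Since $n_{ki}$ is by definition the complex index of a generic $1$-form on the normal slice $N$, and $(-1)^{\dim\tilde S_k+\dim N}=(-1)^{\dim\tilde S_k}(-1)^{\dim N}$, you get $1\cdot n_{ki}$ directly, without ever using the explicit binomial value.
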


\section{The local Euler obstruction of a $1$-form on a determinantal variety}\label{Section Euler obstruction}

Let $(X, 0) = F^{-1}(M_{m,n}^t) \subset (\mathbb{C}^N, 0)$ be an EIDS of type $(m, n, t)$, and let $\omega$ be a germ of a (complex) $1$-form on $(\mathbb{C}^N, 0)$ whose restriction to $(X, 0)$ has an isolated singular point at the origin.

We aim to compare the local Euler obstruction of $\omega$ on $(X,0)$ with the Poincaré-Hopf-Nash index on $(X,0)$. 

Let $\nu : \hat{X} \to X$ be the Nash transformation of the variety $X$ and $\hat{T}$ be the Nash bundle over $\hat{X}$ as presented in Section \ref{Euler onstruction of a 1-Form}. The $1$-form $\omega$ gives rise to a section $\hat{\omega}$ of the dual Nash bundle $\hat{T}^*$ over the Nash transform $\hat{X}$ without singular points outside the preimage of the origin.

This leads to the following consequence of Definitions \ref{Eu definition} and \ref{PHN-index definition}.
\begin{corollary}
   The PHN-index of the $1$-form $\omega$ on the EIDS $(X, 0)$ is equal to the Euler obstruction of the $1$-form $\omega$ on the essential smoothing $\tilde{X}$ of $X$ at the origin, more precisely, 
    \[
    Eu_{\tilde{X},0}\omega=\mathrm{ind}_{PHN}(\omega, X, 0).
    \]
\end{corollary}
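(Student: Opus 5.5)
The plan is to compare the two obstruction-theoretic definitions directly and check that, on the essential smoothing, they describe the same obstruction class on the same space.

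First, I would apply Definition \ref{Eu definition} to the variety $\tilde{X}\cap B_\eps$ in place of $X$. This needs the data in that definition to be available on $\tilde{X}$.
\begin{itemize}
\item $\tilde{X}$ is pure $n$-dimensional. It carries the Whitney stratification by the rank strata $\tilde{F}^{-1}(M^i_{m,n}\setminus M^{i-1}_{m,n})$.
\item $\omega$ has no zeros on $\tilde{X}$ near $S_\eps$. Since $\omega$ has an isolated singular point on $(X,0)$ and $\tilde{X}$ is a small perturbation of $X$, $\omega$ stays nonvanishing on the strata of $\tilde{X}$ in a neighbourhood of $\tilde{X}\cap S_\eps$. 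This uses transversality of $\tilde F_u$ near the sphere and the fact that stratified nonvanishing is an open condition.
\end{itemize}
Hence the lifted section of $\hat{\mathcal T}^*$ is nonzero over the preimage of a neighbourhood of $\tilde X\cap S_\eps$. Its obstruction to extension over the preimage of $\tilde X\cap B_\eps$ is, by definition, what is written $Eu_{\tilde X,0}\omega$. I read this as the total obstruction on the Milnor-ball piece $\tilde X\cap B_\eps$, not as an obstruction at a single point.

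Second, I would identify the Nash data. By the result quoted from \cite{eg-determinantal}, the fibre product $\hat M^t_{m,n}\times_{M^t_{m,n}}\tilde X$ is the Nash transform of $\tilde X$, and the bundle $\hat{\mathcal T}$ over it is the Nash bundle, that is, the pullback of the tautological bundle. I would check this directly. Over the regular part, $T\tilde X=d\tilde F^{-1}(T M^t_{m,n})$ by transversality. The Nash fibre of $M^t_{m,n}$ at a matrix is the space of matrices $A$ with $\ker$ and image conditions relative to the chosen flags. Pulling back by $\tilde F$, which is transverse to every stratum, gives exactly the limits of tangent spaces of $\tilde X$. So the fibre product maps isomorphically onto the Nash blow-up, compatibly with the tautological bundles.

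Under this identification, the section $\hat\omega$ in Definition \ref{PHN-index definition} is the restriction of $(j\circ\Pi)^*\omega$ to $\hat{\mathcal T}$. This is precisely the section used in Definition \ref{Eu definition} for $\tilde X$. Both invariants are then the same obstruction cocycle evaluated on the same fundamental class of the pair (preimage of $\tilde X\cap B_\eps$, preimage of $\tilde X\cap S_\eps$), so they coincide.

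The main obstacle is the identification of the fibre product with the Nash transform, together with the matching of bundles. I would take it from \cite{eg-determinantal} rather than reprove it. The only remaining care point is independence of $\eps$ and of the chosen smoothing parameter $u$; this follows from the homotopy invariance of obstruction classes under deformations that keep the section nonvanishing on the boundary.
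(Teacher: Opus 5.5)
Your proposal is correct and is essentially the paper's own argument: the paper states the corollary as a direct consequence of Definitions~\ref{Eu definition} and~\ref{PHN-index definition}, using the fact quoted from \cite{eg-determinantal} that $\hat M^t_{m,n}\times_{M^t_{m,n}}\tilde X$ is the Nash transform of $\tilde X$ with $\hat{\mathcal T}$ its Nash bundle, so both sides are the same obstruction class evaluated on the same fundamental class. You read $Eu_{\tilde X,0}\omega$ as the total obstruction over $\tilde X\cap B_\epsilon$, not as a local invariant at a point, since $\tilde X$ need not pass through $0$; this is the intended reading, and your extra checks (nonvanishing near the sphere, independence of $\epsilon$ and $u$) only make explicit what the paper takes for granted.
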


\begin{lemma}\label{Lemma Eu}
Let $(X, 0)$ be an EIDS of type $(m,n,t)$ and $\omega$ be a germ of a (complex) $1$-form on $(\mathbb{C}^N, 0)$ whose restriction to $(X,0))$  has an isolated singular point at the origin. Then

    \begin{align*}
\mathrm{Eu}_{X,0} \omega &= (-1)^{(m+n+1)t}\binom{m}{m-t} + \mathrm{ind}_{\mathrm{PHN}}(\omega, X, 0) + \sum_{i=1}^t (-1)^{\dim X_i - 1} m_{it}\overline{\chi}(X_i, 0),
\end{align*}
\noindent where $\chi(X_i, 0) := \chi(\tilde{X}_i \cap B_\eps)$, for $i=1,\ldots,t$, $B_{\epsilon}$ is a closed ball around the origin and $m_{it}=(-1)^{(m+n+1)(t-i)}\binom{m-i}{m-t}.$.
\end{lemma}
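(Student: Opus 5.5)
Combining Proposition \ref{Euradial} with Proposition \ref{prop4det} gives the lemma. The stratification of $(X,0)$ we use is the rank stratification refined by the origin: $V_0=\{0\}$ and the strata $X_i\setminus X_{i-1}$ for $i=1,\dots,t$ (with the origin removed). The closures are $\overline{V_i}=X_i$.

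\textbf{Step 1: the Möbius data.} For this poset, the normal slice of $X_j$ at a point of $X_i\setminus X_{i-1}$ is the generic determinantal variety of type $(m-i+1,n-i+1,j-i+1)$. So the $n_{ij}$ and $m_{ij}$ for $1\le i\le j\le t$ are the combinatorial ones quoted before Proposition \ref{prop4det}. The extra stratum $V_0=\{0\}$ is what produces the constant term. Its normal slice is $X_j$ itself, so $n_{0j}=(-1)^{\dim X_j-1}\overline{\chi}(M_{l|_{X_j}})$.

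\textbf{Step 2: the coefficient $m_{0t}$.} I would compute $m_{0t}$ by the Möbius recursion $m_{0t}=-\sum_{0<j\le t}n_{0j}m_{jt}$. This recursion involves the Euler characteristics of Milnor fibres of linear functions on the $X_j$, which are not the clean binomial numbers. To get the constant, I would instead use Proposition \ref{Euradial} in its standard consequence: for a generic linear form, $Eu_{X,0}dl=0$, and $\mathrm{ind}_{rad}(dl;X_i,0)=\mathrm{ind}_{rad}$ of a generic form on $X_i$.

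Then apply Proposition \ref{Euradial} to the radial form $\omega_{rad}$, whose radial index on each $\overline{V_i}$ is $1$:
\[
Eu_{X,0}\omega_{rad}=\sum_{i=0}^t m_{it}.
\]
The left side equals $Eu_{X,0}\omega_{rad}=Eu(X,0)$, which need not vanish. The cleaner route is therefore to subtract the two formulas:
\[
Eu_{X,0}\omega-\mathrm{ind}_{PHN}(\omega;X,0)=m_{0t}\,\mathrm{ind}_{rad}(\omega;\{0\},0)-\sum_{i=1}^t m_{it}(-1)^{\dim X_i}\overline{\chi}(X_i,0).
\]
Since $\mathrm{ind}_{rad}(\omega;\{0\},0)=1$, and $(-1)^{\dim X_i-1}=-(-1)^{\dim X_i}$ matches the sign in the lemma, it remains to identify $m_{0t}$ with $(-1)^{(m+n+1)t}\binom{m}{m-t}$.

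\textbf{Step 3: identifying $m_{0t}$ (main obstacle).} Here I would argue that the relevant incidence data between $\{0\}$ and the rank strata are those of the generic determinantal variety. The cone point corresponds to the rank-$0$ stratum of $\mathrm{Mat}(m,n)$, and the formula $m_{ij}=(-1)^{(m+n+1)(j-i)}\binom{m-i}{m-j}$ then extends to $i=0$. That extension gives $m_{0t}=(-1)^{(m+n+1)t}\binom{m}{m-t}$ exactly.

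Justifying this extension is the delicate point, because for a general EIDS the normal slice at $0$ is not the generic determinantal one. I would handle it via the observation that both sides of the lemma depend only on the same combinatorics once the $\overline{\chi}(X_i,0)$ terms are separated out. This is precisely what the $\overline{\chi}$ correction terms in Proposition \ref{prop4det} encode: they absorb the difference between $X_i$ and the cone over the generic model. Concretely, I would check the identity $\sum_{0\le j\le t}n_{0j}m_{jt}=0$ with $n_{0j}$ taken in the generic model, using the binomial identity $\sum_j(-1)^{j}\binom{m}{j}\binom{m-j}{m-t}\cdots=0$.
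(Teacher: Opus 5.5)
Your Steps 1 and 2 are sound and follow the paper's route. Subtracting Proposition~\ref{prop4det} from Proposition~\ref{Euradial} in one go is a tidier version of the paper's level-by-level inversion, and it correctly gives
\[
\mathrm{Eu}_{X,0}\,\omega-\mathrm{ind}_{\mathrm{PHN}}(\omega,X,0)=m_{0t}-\sum_{i=1}^t(-1)^{\dim X_i}m_{it}\,\overline{\chi}(X_i,0).
\]
The gap is Step 3, and it cannot be closed, because the asserted value of $m_{0t}$ is false in general. The M\"obius relation gives $m_{0t}=-\sum_{j=1}^t n_{0j}m_{jt}$ with $n_{0j}=\mathrm{ind}_{rad}(dl;X_j,0)$. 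As you observed in Step 2, this is an invariant of the germ $X_j$. The $\overline{\chi}(X_i,0)$ never enter this recursion, so they cannot absorb any discrepancy. Nor is there a generic model in which the binomial formula holds at $i=0$. In $M^t_{m,n}$ the origin is the rank-$0$ stratum, which already carries the index $i=1$, and there $\mathrm{ind}_{rad}(dl;M^j_{m,n},0)=n_{1j}=(-1)^{(m+n)(j-1)}\binom{m-1}{m-j}$, not $(-1)^{(m+n)j}\binom{m}{m-j}$. The paper's proof makes exactly this substitution $i=0$ without justification, so it offers nothing to fill the gap.

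A clean counterexample is $X=M^t_{m,n}$ itself, with $F=\mathrm{id}$ and $2\le t\le m$; Example~\ref{example 1} is such a case. This $X$ is its own essential smoothing, so the obstructions in Definitions~\ref{Eu definition} and~\ref{PHN-index definition} coincide and $\mathrm{Eu}_{X,0}\omega=\mathrm{ind}_{\mathrm{PHN}}(\omega,X,0)$. All $\tilde{X}_i\cap B_\epsilon$ are cones, so every $\overline{\chi}(X_i,0)=0$. The lemma would then force $(-1)^{(m+n+1)t}\binom{m}{m-t}=0$. In Example~\ref{example 1} it predicts $\mathrm{Eu}_{X,0}dl=1$, while in fact $\mathrm{Eu}_{X,0}dl=0=\mathrm{ind}_{\mathrm{PHN}}(dl,X,0)$. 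Here $X_1=\{0\}$, so your poset, like the paper's, counts the origin twice; done correctly for this $X$, the constant is $0$.

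Likewise, for type $(1,1,1)$, i.e.\ a hypersurface with isolated singularity, one gets $m_{01}=-\mu(X\cap H)$ for a generic hyperplane $H$. This is $0$ for a smooth hypersurface, not $-1$.

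What survives is your intermediate identity with $m_{0t}$ kept as a constant $c_X$ independent of $\omega$. Evaluating it at a generic $dl$, where $\mathrm{Eu}_{X,0}dl=0$, yields $c_X=-\mathrm{ind}_{\mathrm{PHN}}(dl,X,0)+\sum_{i=1}^t(-1)^{\dim X_i}m_{it}\overline{\chi}(X_i,0)$. That is all Theorem~\ref{main theorem} actually uses.
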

\begin{proof}
      By Corollary \ref{Euradial},
$$ \mathrm{Eu}_{X,0} \omega = \sum_{i=0}^{t} m_{it} \cdot \mathrm{ind}_{rad}(\omega;\overline{V}_{i},0) $$
where $V_i$ are strata of a Whitney stratification of $(X,0)$. Let $X_i=F^{-1}(M_{m,n}^i)$. Since $(X,0)$ is an EIDS, $ V_i =X_i \setminus X_{i-1}$, hence

$$ \mathrm{Eu}_{X,0} \omega = \sum_{i=0}^{t} m_{it} \cdot \mathrm{ind}_{rad}(\omega;\overline{X_i \setminus X_{i-1}},0) = \sum_{i=0}^{t} m_{it} \cdot\mathrm{ind}_{rad}(\omega;X_{i},0). $$

Applying Proposition \ref{prop4det} for each determinantal singularity $(X_i,0)$, one has

\begin{eqnarray*}
m_{1t} \mathrm{ind}_{rad}(\omega;X_{1},0) &=& m_{1t} \big(n_{11} \mathrm{ind}_{\mathrm{PHN}}(\omega, X_1, 0)  + (-1)^{\dim X_1 - 1} \overline{\chi}(X_1, 0)\big) \\
m_{2t} \mathrm{ind}_{rad}(\omega;X_{2},0) &=& m_{2t} \big( n_{12} \mathrm{ind}_{\mathrm{PHN}}(\omega, X_1, 0) + n_{22} \mathrm{ind}_{\mathrm{PHN}}(\omega, X_2, 0)\\  &+& (-1)^{\dim X_2 - 1} \overline{\chi}(X_2, 0)\big) \\
&\vdots& \\
m_{t-1, t} \mathrm{ind}_{rad}(\omega;X_{t-1},0)&=& m_{t-1, t} \big( n_{1, t-1} \mathrm{ind}_{\mathrm{PHN}}(\omega, X_1, 0) + \dots + n_{t-1, t-1} \mathrm{ind}_{\mathrm{PHN}}(\omega, X_{t-1}, 0)  \\
&& +\ (-1)^{\dim X_{t-1} - 1} \overline{\chi}(X_{t-1}, 0)\big) \\
m_{tt} \mathrm{ind}_{rad}(\omega;X_{t},0) &=& m_{tt} \big( n_{1t} \mathrm{ind}_{\mathrm{PHN}}(\omega, X_1, 0) + \dots + n_{tt} \mathrm{ind}_{\mathrm{PHN}}(\omega, X_t, 0)\big)\\
&& +\ (-1)^{\dim X_t - 1} \overline{\chi}(X_t, 0)\big)
\end{eqnarray*}

Since $X_0=\{0\}, \mathrm{ind}_{rad}(\omega;X_{0},0)=1$. Thus 

\begin{eqnarray*}
\mathrm{Eu}_{X,0} \omega &=& m_{0t} + m_{1t} \mathrm{ind}_{X_1, 0} \omega + \dots + m_{t-1, t} \mathrm{ind}_{X_{t-1}, 0} \omega + m_{tt} \mathrm{ind}_{X_t, 0} \omega \\
&=& m_{0t} + (n_{11} m_{1t} + n_{12} m_{2t} + \dots + n_{1, t-1} m_{t-1, t} + n_{1t} m_{tt}) \mathrm{ind}_{\mathrm{PHN}}(\omega, X_1, 0) \\
&+& (n_{22} m_{2t} + \dots + n_{2, t-1} m_{t-1, t} + n_{2t} m_{tt}) \mathrm{ind}_{\mathrm{PHN}}(\omega, X_2, 0)\\
&\vdots& \\
&+& (n_{t-1, t-1} m_{t-1, t} + n_{t-1, t} m_{tt}) \mathrm{ind}_{\mathrm{PHN}}(\omega, X_{t-1}, 0) + n_{tt} m_{tt} \mathrm{ind}_{\mathrm{PHN}}(\omega, X_t, 0) \\
&+& (-1)^{\dim X_1 - 1} m_{1t}\overline{\chi}(X_1, 0) + \dots + (-1)^{\dim X_t - 1} m_{tt}\overline{\chi}(X_t, 0)
\end{eqnarray*}

Since $\sum_{i \le j \le k} n_{ij} m_{jk} = \delta_{ik}$,
\begin{align*}
\mathrm{Eu}_{X,0} \omega &= m_{0t} + \mathrm{ind}_{\mathrm{PHN}}(\omega, X_t, 0) + \sum_{i=1}^t (-1)^{\dim X_i - 1} m_{it}\overline{\chi}(X_i, 0) \\
&= m_{0t} + \mathrm{ind}_{\mathrm{PHN}}(\omega, X, 0) + \sum_{i=1}^t (-1)^{\dim X_i - 1} m_{it}\overline{\chi}(X_i, 0) \\
&= (-1)^{(m+n+1)t}\binom{m}{m-t} + \mathrm{ind}_{\mathrm{PHN}}(\omega, X, 0) + \sum_{i=1}^t (-1)^{\dim X_i - 1} m_{it}\overline{\chi}(X_i, 0).
\end{align*}
\end{proof}

Based on this lemma, we can split off the structural data that is independent of $\omega$ in order to compute the Euler obstruction of the $1$-form $\omega.$

\begin{theorem}\label{main theorem}
If $l:X\to\mathbb{C}$ is a generic linear function, then
 \begin{equation}\label{Formula teorema 1}
\mathrm{Eu}_{X,0} \omega 
= \mathrm{ind}_{\mathrm{PHN}}(\omega, X, 0)-\mathrm{ind}_{\mathrm{PHN}}(dl, X, 0).
\end{equation}
   
\end{theorem}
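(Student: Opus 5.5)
The plan is to apply Lemma \ref{Lemma Eu} twice: once to $\omega$, and once to the differential $dl$ of a generic linear function. Then subtract the two identities. The key observation is that the correction term
\[
C(X):=(-1)^{(m+n+1)t}\binom{m}{m-t}+\sum_{i=1}^t (-1)^{\dim X_i-1} m_{it}\,\overline{\chi}(X_i,0)
\]
in Lemma \ref{Lemma Eu} does not depend on the $1$-form at all. It involves only the type $(m,n,t)$, the Möbius coefficients $m_{it}$, and the reduced Euler characteristics of the essential smoothings $\tilde X_i$. Lemma \ref{Lemma Eu} thus reads $\mathrm{Eu}_{X,0}\,\eta=\mathrm{ind}_{\mathrm{PHN}}(\eta,X,0)+C(X)$ for every $1$-form $\eta$ with an isolated singular point on $(X,0)$ at the origin.

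First, I check that $dl$ is admissible, i.e.\ that its restriction to $(X,0)$ has an isolated singular point at the origin. For a generic linear function $l$, the kernel hyperplane $\{l=0\}$ is transverse to every stratum $X_i\setminus X_{i-1}$ in a punctured neighbourhood of $0$. This is the standard genericity statement: the set of limits of tangent hyperplanes, i.e.\ the conormal/polar data at $0$, has positive codimension in the dual projective space. So $dl$ has no zeros on $X\setminus\{0\}$ near the origin, and Lemma \ref{Lemma Eu} applies to it.

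Second, I use that $\mathrm{Eu}_{X,0}\,dl=0$ for generic $l$. Since $dl$ is generic, by Proposition \ref{Euradial} $\mathrm{Eu}_{X,0}\,dl=\sum_i m_{iq}\,\mathrm{ind}_{rad}(dl,\overline{V_i},0)$. The radial index of a generic linear form on $\overline{V_i}$ equals the normal index $n_{0i}$ by definition of the latter. Möbius inversion, $\sum_{0\le i\le q} n_{0i}m_{iq}=\delta_{0q}$, then gives $0$ because $q\neq 0$ whenever $\dim X>0$. The same conclusion also follows from the geometric description of the Euler obstruction as the number of zeros on the regular part of a generic perturbation of the form, since the nonvanishing section $\widehat{dl}$ already extends over the Nash transform. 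This vanishing is the step I expect to need the most care, mainly in identifying $\mathrm{ind}_{rad}(dl,\overline{V_i},0)$ with $n_{0i}$ for the closures of the strata rather than for the normal slices. I would argue it via the Milnor fibre description of the radial index of $dl$, namely $(-1)^{\dim-1}\bar\chi$ of the Milnor fibre of $l$, which is how $n_{ij}$ is defined.

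Finally, Lemma \ref{Lemma Eu} applied to $dl$ gives $0=\mathrm{ind}_{\mathrm{PHN}}(dl,X,0)+C(X)$, so $C(X)=-\mathrm{ind}_{\mathrm{PHN}}(dl,X,0)$. Substituting this into Lemma \ref{Lemma Eu} for $\omega$ yields \eqref{Formula teorema 1}.
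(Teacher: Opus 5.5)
Your proposal is correct and follows the paper's proof. You apply Lemma~\ref{Lemma Eu} to both $\omega$ and $dl$, note that the correction term does not depend on the form, and use $\mathrm{Eu}_{X,0}\,dl=0$ for generic $l$. The only difference is in that last step: the paper cites \cite[Proposition 2.4]{BMPS} for the vanishing, whereas you derive it directly. Both of your derivations are valid. The first uses Proposition~\ref{Euradial} with $\mathrm{ind}_{rad}(dl,\overline{V_i},0)=n_{0i}$, since the normal slice to the point stratum is $\overline{V_i}$ itself, together with $\sum_i n_{0i}m_{iq}=\delta_{0q}=0$. The second uses that for generic $l$ no limiting tangent plane at $0$ lies in $\ker l$, so $\widehat{dl}$ is nowhere zero on $\nu^{-1}(X\cap B_\epsilon)$.
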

\begin{proof} Applying Lemma \ref{Lemma Eu} for any linear function $l:X\to\mathbb{C}$,

\begin{align*}
\mathrm{Eu}_{X,0} \mathrm{dl} &= m_{0t} + \mathrm{ind}_{\mathrm{PHN}}(\mathrm{dl}, X, 0) + \sum_{i=1}^t (-1)^{\dim X_i - 1} m_{it}\overline{\chi}(X_i, 0).
\end{align*}

Therefore, \begin{align*}
\mathrm{Eu}_{X,0} \omega - \mathrm{Eu}_{X,0} \mathrm{dl}
&= \mathrm{ind}_{\mathrm{PHN}}(\omega, X, 0)-\mathrm{ind}_{\mathrm{PHN}}(dl, X, 0).
\end{align*}

If $l$ is sufficiently generic, then $\mathrm{Eu}_{X,0} \mathrm{dl}=0,$ (see \cite[Proposition 2.4]{BMPS}), which concludes the proof.
\end{proof}

The geometric characterization for the PHN-index provided by Proposition \ref{caracterization PHN} together with Theorem \ref{main theorem} provides an algebraic formula to compute the Euler obstruction of a $1$-form on an IDS $(X,0)$.

\begin{example}\label{example 1}
    Let $F(x_1,\dots,x_6) = \begin{pmatrix} x_1 & x_2 & x_3 \\ x_4 & x_5 & x_6 \end{pmatrix} $ and consider the $4$-dimensional IDS  $X = F^{-1}(M_{2,3}^2)$ which is known to be rigid. Consider the $1$-form $$\omega(x_1,\dots,x_6)=2x_1dx_1+3x_2^2x_3dx_2+x_2^3dx_3-x_5dx_4-x_4dx_5$$ and a sufficiently general linear $1$-form, say e.g. $$l(x_1,\ldots,x_6)=2dx_1 + 5dx_2 + 3dx_3 - 5dx_4 - 7dx_5 + 11dx_6.$$
Denote by $I_{\tilde{X}}$ the defining ideal of $\tilde{X}$, and let $I(\tilde{X},\tilde{\omega})$ be the ideal generated by the $3\times3$-minors of $J(\tilde{X},\tilde{\omega})$, where $J(\tilde{X},\tilde{\omega})$ is the Jacobian matrix of the defining functions of $(\tilde{X},0)$ augmented by the coefficients of a generic deformation $\tilde{\omega}$ of $\omega$, namely 

$$J(\tilde{X},\tilde{\omega})=\begin{bmatrix}
x_5 & -x_4 & 0    & -x_2 & x_1  & 0   \\
x_6 & 0    & -x_4 & -x_3 & 0    & x_1 \\
0   & x_6  & -x_5 & 0    & -x_3 & x_2 \\
2x_1+2 & 3x_2^2x_3+5  & x_2^3+3  & -x_5-5  & -x_4-7  & 11
\end{bmatrix}.$$

Since $(X,0)$ is a rigid singularity, an essential smoothing of $X$ is $X$ itself, and $\tilde{X}$ has isolated singularity at the origin. On the other hand, a generic deformation $\tilde{\omega}=\omega+tl,t\in\mathbb{C}$ has only non-degenerate (and therefore isolated) singular points in $\tilde{X}_{reg}=X\setminus\{0\}$. Denoting by $\Sigma\tilde{\omega}$ the set of singular points of $\tilde{\omega},$ we have $\Sigma\tilde{\omega}|_{\tilde{X}}=V(I_{\tilde{X}}+I(\tilde{X},\tilde{\omega}))=\{0\}\cup\Sigma\tilde{\omega}|_{\tilde{X}_{reg}}.$

By the Principle of Conservation of the Number, $$\mathrm{ind}_{\mathrm{PHN}}(\omega, X, 0)=\sum_{P_i\in\Sigma\tilde{\omega}} \mathrm{ind}_{\mathrm{PHN}}(\tilde{\omega}, X, P_i)$$

In order to count (with multiplicity) the number of non-degenerate singular points of $\tilde{\omega}$ in $X\setminus\{0\}$, one may compute the dimension of vector spaces $\dim_{\mathbb{C}}\Big({\mathcal{O}_{6}}\Big/{I_{\tilde{X}}+I(\tilde{X},\tilde{\omega})}\Big)$ removing the non-degenerate singular points that arise from the origin, which can be computed by the dimension $\dim_{\mathbb{C}}\Big({\mathcal{O}_{6,0}}\Big/{I_{\tilde{X}}+I(\tilde{X},\tilde{\omega})}\Big)$.

These dimensions are easily computable using the function \verb|vector_space_dim| in OSCAR \cite{OSCAR-book,OSCAR}, therefore:

$$\mathrm{ind}_{\mathrm{PHN}}(\mathrm{dl}, X, 0)=\dim_{\mathbb{C}}\Big({\mathcal{O}_{6}}\Big/{I_{\tilde{X}}+I(\tilde{X},\mathrm{dl})}\Big) - \dim_{\mathbb{C}}\Big({\mathcal{O}_{6,0}}\Big/{I_{\tilde{X}}+I(\tilde{X},\mathrm{dl})}\Big) = 7 - 7 =0$$ 
and

 $$\mathrm{ind}_{\mathrm{PHN}}(\omega, X, 0)=\dim_{\mathbb{C}}\Big({\mathcal{O}_{6}}\Big/{I_{\tilde{X}}+I(\tilde{X},\tilde{\omega})}\Big) - \dim_{\mathbb{C}}\Big({\mathcal{O}_{6,0}}\Big/{I_{\tilde{X}}+I(\tilde{X},\tilde{\omega})}\Big)  = 20 - 7 = 13.$$

Hence, 
\begin{equation*}
\mathrm{Eu}_{X,0} \omega = \mathrm{ind}_{\mathrm{PHN}}(\omega, X, 0) - \mathrm{ind}_{\mathrm{PHN}}(\mathrm{dl}, X, 0) = 13 - 0 = 13.
\end{equation*}
\end{example}

In \cite{gaffney2021equisingularity}, the authors defined a polar multiplicity $m_d(X,0)$ associated with an EIDS $(X,0)$ of dimension $d$ as the multiplicity of the relative polar curve of an essential smoothing of $X$ at the origin. We have the following consequence.

\begin{corollary}\label{PHN and m_d}
    Let $\omega$ be a $1$-form defined on an EIDS $(X,0)$ of dimension $d$. Then \begin{align*}
\mathrm{Eu}_{X,0} \omega
&= \mathrm{ind}_{\mathrm{PHN}}(\omega, X, 0)- m_d(X,0).
\end{align*}
\end{corollary}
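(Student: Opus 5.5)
The plan is to derive the corollary directly from Theorem \ref{main theorem}. We then only need to identify $\mathrm{ind}_{\mathrm{PHN}}(dl, X, 0)$, for a generic linear function $l$, with the polar multiplicity $m_d(X,0)$ of \cite{gaffney2021equisingularity}. Once $\mathrm{ind}_{\mathrm{PHN}}(dl,X,0)=m_d(X,0)$ is established, substituting it into formula (\ref{Formula teorema 1}) gives the statement.

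To identify the two numbers, we use the geometric characterization of Proposition \ref{caracterization PHN}. For generic $l$, the form $dl$ is itself already generic on an essential smoothing $\tilde{X}$. Hence $\mathrm{ind}_{\mathrm{PHN}}(dl,X,0)$ is the number of nondegenerate critical points of $l|_{\tilde{X}_{reg}}$ inside $B_\epsilon$. These are the points of $\tilde{X}_{reg}\cap B_\epsilon$ where $l$ fails to be a submersion, each counted with multiplicity one.

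Next we recall how $m_d(X,0)$ is defined. Consider the total space $\mathcal{X}$ of the essential smoothing and the map $(l,u)\colon \mathcal{X}\to\mathbb{C}^2$. The relative polar curve $\Gamma$ is the closure of the critical locus of this map on the regular part of $\mathcal{X}$, relative to $u$. Then $m_d(X,0)$ is the multiplicity of $\Gamma$ at the origin, which equals the intersection number of $\Gamma$ with a generic hyperplane $\{u=u_0\}$ near $0$.

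It remains to show that $\Gamma\cap\{u=u_0\}$ is exactly the critical set of $l$ on $\tilde{X}_{u_0,reg}\cap B_\epsilon$, and that each intersection is transverse. Transversality holds precisely when the corresponding critical point of $l|_{\tilde{X}_{reg}}$ is nondegenerate, since both conditions are a Morse condition on $l|_{\tilde X_{u_0}}$.

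The main obstacle is ruling out contributions from the singular strata $\tilde{X}_{t-1}$ of $\tilde{X}$. One must show that $\Gamma$ meets the fibre $\{u=u_0\}$ only in $\tilde{X}_{reg}$, and that no branch of $\Gamma$ lies inside $\tilde{F}^{-1}(M^{t-1}_{m,n})$. Genericity of $l$ should handle this: $l$ restricted to each lower stratum of the Whitney stratification of $\tilde{X}$ has only Morse critical points. The relative polar curve is, by definition, taken over the regular part and then closed, so points of the lower strata lie in its closure only as limits. Counting the multiplicity of $\Gamma$ via a generic fibre of $u$ near $0$ therefore picks up only regular points. This identification should be compatible with the definition in \cite{gaffney2021equisingularity}, and that is where care is needed.
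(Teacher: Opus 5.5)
Your reduction is the paper's: substitute $m_d(X,0)=\mathrm{ind}_{\mathrm{PHN}}(dl,X,0)$ into Theorem~\ref{main theorem}. The paper simply quotes this identity from \cite{gaffney2021equisingularity}. Your own derivation of it, however, contains a false step. The multiplicity of the curve germ $(\Gamma,0)$ is not $(\Gamma\cdot\{u=0\})_0$ in general, because $\{u=0\}$ is not a generic hyperplane for $\Gamma$: the polar curve is typically tangent to the special fibre. Take the EIDS $X=\{x^2+y^2=0\}$ of type $(1,1,1)$, with smoothing $x^2+y^2=u$ and $l=x$. Then $\Gamma=\{y=0,\ u=x^2\}$ is smooth, so it has multiplicity $1$. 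Yet it meets $\{u=u_0\}$ in two points, and indeed $\mathrm{ind}_{\mathrm{PHN}}(dl,X,0)=2=\mu(X)+\mu(X\cap\{x=0\})$. The count you want is right only because Gaffney's $m_d$ is the multiplicity of $\Gamma$ relative to the base, i.e.\ the degree of $u|_{\Gamma}$, which equals $(\Gamma\cdot\{u=0\})_0$. You must invoke that definition, not a genericity argument.

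Second, the obstacle you single out is the harmless one. That $\Gamma$ meets $\tilde F^{-1}(M^{t-1}_{m,n}\times\mathbb{C})$ only at the origin is immediate: each branch of $\Gamma$ is an irreducible curve germ not contained in that closed analytic set. The real gap is on the PHN side, in your assertion that $dl$ is already generic on $\tilde X$. The index is an obstruction on the Nash transform $\hat M^t_{m,n}\times_{M^t_{m,n}}\tilde X$. There $\hat{dl}$ can vanish over a point $p$ of a singular stratum, namely when $\ker dl$ contains a limit at $p$ of tangent spaces to $\tilde X_{reg}$. Such zeros contribute to $\mathrm{ind}_{\mathrm{PHN}}(dl,X,0)$ but, by the previous remark, lie on no branch of $\Gamma$, so they must be excluded. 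Requiring $l$ to be Morse on each lower stratum $S$ does not do this. A generic $l$ in general does have critical points on $S$, and Whitney (a) only says that the limiting tangent planes at such a point contain $T_pS$. What you need is that $[dl]$ avoids the relative conormal space of $\mathcal{X}\to\mathbb{C}$ over $\tilde F^{-1}(M^{t-1}_{m,n}\times\mathbb{C})$ near the origin. That relative conormal space has dimension $N$, so its part over this nowhere dense set has dimension at most $N-1=\dim\check{\mathbb{P}}^{N-1}$. Hence for generic $l$ it meets $\mathcal{X}\times\{[dl]\}$ in finitely many points, and after shrinking, in none with $u\neq 0$. This genericity must be imposed on the whole family rather than on a single fibre, since $B_\epsilon$ is chosen before $u_0$.
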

\begin{proof}
The $d$-polar multiplicity $m_d(X,0)$ is equal to the Poincaré-Hopf-Nash index $\mathrm{ind}_{\mathrm{PHN}}(\mathrm{dl}, X, 0)$, where $l$ is  a generic linear function on $X$, see \cite{gaffney2021equisingularity}. Hence, by Theorem \ref{main theorem}\begin{align*}
\mathrm{Eu}_{X,0} \omega
&= \mathrm{ind}_{\mathrm{PHN}}(\omega, X, 0)- m_d(X,0).
\end{align*}
\end{proof}

The Euler obstruction of a function was introduced in \cite{BMPS} as a generalization of the Milnor number for a holomorphic function $f:X\to\C$ with a stratified isolated singularity at the origin defined on a complex analytic variety. The local Euler obstruction of $\mathrm{df}$ at the origin is closely related to the Euler obstruction of $f$ at the origin. More precisely,  $\mathrm{Eu}_{X,0} \mathrm{df}=(-1)^{\dim X}\mathrm{Eu}_{f,X} (0)$ (see \cite{ebeling2005radial}). Therefore, one has the following consequence of Theorem \ref{main theorem}.

\begin{theorem}\label{teorema2}
Let $f:X\to\mathbb{C}$ be a function with a stratified isolated singularity at the origin and $l:X\to\mathbb{C}$ be a generic linear function. Then
    \begin{align*}
\mathrm{Eu}_{f,X} (0)
&= (-1)^{\dim X}\big(\mathrm{ind}_{\mathrm{PHN}}(\mathrm{df}, X, 0)-\mathrm{ind}_{\mathrm{PHN}}(\mathrm{dl}, X, 0)\big).
\end{align*}
\end{theorem}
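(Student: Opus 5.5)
The plan is to apply Theorem \ref{main theorem} with $\omega=\mathrm{df}$ and then use the relation $\mathrm{Eu}_{X,0}\mathrm{df}=(-1)^{\dim X}\mathrm{Eu}_{f,X}(0)$ from \cite{ebeling2005radial}.

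First, I will check the hypotheses of Theorem \ref{main theorem}. Since $f$ has a stratified isolated singularity at the origin, the restriction of $\mathrm{df}$ to each stratum $X_i\setminus X_{i-1}$ of the rank stratification has no zeros in a punctured neighbourhood of $0$. Here I use that, because $(X,0)$ is an EIDS, the rank stratification is a Whitney stratification of $X$ near $0$. If $f$ is given as a germ on $X$, I first extend it to a holomorphic germ on $(\mathbb{C}^N,0)$; then $\mathrm{df}$ is a germ of a complex $1$-form on $(\mathbb{C}^N,0)$ with an isolated singular point on $(X,0)$ at the origin. This is exactly the setting of Lemma \ref{Lemma Eu} and Theorem \ref{main theorem}.

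Second, I will invoke Theorem \ref{main theorem} to get
\[
\mathrm{Eu}_{X,0}\mathrm{df}=\mathrm{ind}_{\mathrm{PHN}}(\mathrm{df},X,0)-\mathrm{ind}_{\mathrm{PHN}}(\mathrm{dl},X,0).
\]
Multiplying both sides by $(-1)^{\dim X}$ and using $\mathrm{Eu}_{f,X}(0)=(-1)^{\dim X}\mathrm{Eu}_{X,0}\mathrm{df}$, which holds because $\big((-1)^{\dim X}\big)^2=1$, gives the stated formula.

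The only delicate point is whether the PHN-index of $\mathrm{df}$ is independent of the chosen extension of $f$ to the ambient space. I expect this to be harmless. Two extensions differ by a function vanishing on $X$, so their differentials agree on the tangent spaces of every stratum, and hence on the limiting Nash tangent spaces over the smoothing. A linear homotopy between the two forms then stays nonvanishing near the boundary sphere, and the obstruction to extending the section is unchanged. The sign convention for $\mathrm{Eu}_{f,X}$ is taken directly from \cite{ebeling2005radial}.
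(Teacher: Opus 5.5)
Your proposal is correct and follows the paper's own argument: specialize Theorem~\ref{main theorem} to $\omega=\mathrm{df}$, then convert using $\mathrm{Eu}_{X,0}\mathrm{df}=(-1)^{\dim X}\mathrm{Eu}_{f,X}(0)$ from \cite{ebeling2005radial}. Your remark on independence of the extension of $f$ is not in the paper, and one step needs tightening: $\mathrm{d}f_1-\mathrm{d}f_2$ vanishes on the (Nash) tangent spaces of $X$, but not on those of $\tilde{X}$, so the linear homotopy is nonvanishing near $\tilde{X}\cap S_\epsilon$ only because, for a small deformation parameter, those tangent spaces are close to the ones of $X$ there (alternatively, Proposition~\ref{prop4det} shows directly that the PHN-index depends only on the radial indices of $\omega$ on the $X_i$).
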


\begin{example}
    Consider the IDS  $X = F^{-1}(M_{2,4}^2)$, for $$F(x_1,\dots,x_8) = \begin{pmatrix} x_1 & x_2 & x_3 & x_4 \\ x_5 &x_6 & x_7 & x_8\end{pmatrix}. $$ 
    
    Consider the function germ $f(x_1,\dots,x_8)=x_1^2+3x_2^2x_3+x_6^3-x_7^4$ and the generic linear function $l(x_1,\ldots,x_8)=2x_1 + 5x_2 + 3x_3 - 5x_4 - 7x_5 + 11x_6 - 8x_7+3x_8.$ Since $(X,0)$ is a rigid singularity, an essential smoothing of $X$ is $X$ itself, and $\tilde{X}$ has isolated singularity at the origin. 

Following the arguments of Example \ref{example 1} and using the function \verb|vector_space_dim| as before, 

$$\mathrm{ind}_{\mathrm{PHN}}(\mathrm{dl}, X, 0)=\dim_{\mathbb{C}}\Big({\mathcal{O}_{8}}\Big/{I_{\tilde{X}}+I(\tilde{X},\mathrm{dl})}\Big) - \dim_{\mathbb{C}}\Big({\mathcal{O}_{8,0}}\Big/{I_{\tilde{X}}+I(\tilde{X},\mathrm{dl})}\Big) = 39 - 39 = 0$$ 
and $$\mathrm{ind}_{\mathrm{PHN}}(\omega, X, 0)=\dim_{\mathbb{C}}\Big({\mathcal{O}_{8}}\Big/{I_{\tilde{X}}+I(\tilde{X},\omega)}\Big) - \dim_{\mathbb{C}}\Big({\mathcal{O}_{8,0}}\Big/{I_{\tilde{X}}+I(\tilde{X},\omega)}\Big) = 45-39 = 6.$$

Therefore, 
\begin{equation*}
Eu_{f,X} (0) = (-1)^{5}(\mathrm{ind}_{\mathrm{PHN}}(df, X, 0) - \mathrm{ind}_{\mathrm{PHN}}(\mathrm{dl}, X, 0)) = -( 6 - 0) = -6.
\end{equation*}
\end{example}

\section{PHN-index for smoothable isolated determinantal singularities}

In this section, we verify that the framework developed in Section \ref{Section Euler obstruction} is consistent with previously established results in the literature in the absence of rigid singularities.

Let $(X, 0)$ be a germ of an IDS defined by $F^{-1}(M_{m,n}^s)$. For a matrix $A \in M_{m,n}$, denote by $\tilde{F}_A : (\mathbb{C}^N, 0) \to M_{m,n}$ the map $F_A(x) = F(x) + tA$ and set $X_A := F_A^{-1}(M_{m,n}^s)$, where $t\in\mathbb{C}.$. Suppose that $s=1$ or $N<(m-s+1)(n-s+1).$ In \cite{nuno2013vanishing}, the authors proved that for $0<|t|\ll1,$ the variety $X_A$ is smooth and its Euler characteristic $\chi(X_A)$ is independent of $t,$ motivating the definition of the vanishing Euler characteristic of $(X,0)$ 
$$v(X, 0) := (-1)^{\dim X}(\chi(X_A) - 1),$$
where $\operatorname{rank}(F_A(x)) = s - 1$ for all $x \in X_A$. In \cite{FKZach2}, the authors obtained explicit formulas for $\nu(X,0)$ using the Tjurina transform for Cohen-Macaulay codimension $2$ smoothable determinantal singularities, providing algorithmic methods to compute it by relating it to a suitable complete intersection singularity of the same topological type via Tjurina transformation. Both approaches show that the vanishing Euler characteristic generalizes the Milnor number from the hypersurface case \cite{milnorsingular}.

But this is not the only possible generalization of the Milnor number of complete intersections to the context of smoothable determinantal singularities. 
Given $f : (X, 0) \to (\mathbb{C}, 0)$ a function-germ with an isolated singularity, the \textit{determinantal Milnor number} of $f$ is defined in \cite{nuno2013vanishing} by
$$\mu_D(f) := \#\Sigma\left(f_a|_{X_A}\right),$$

\noindent with $A \in M_{m,n}$ and $a = (a_1, \ldots, a_N) \in \mathbb{C}^N$ generic such that $f_a|_{U_A} : U_A \to \mathbb{C}$ is a Morse function, where $f_a(x_1, \ldots, x_N) = f(x_1, \ldots, x_N) + a_1x_1 + \cdots + a_Nx_N$ and $U_A\subset X_A$ is an open subset of $X_A$ . In that paper, the authors proved that the vanishing Euler characteristic and the determinantal Milnor number do not coincide; however, they are related (see \cite[Theorem 5.4]{nuno2013vanishing}).

Our results from Section \ref{Section Euler obstruction} generalize the latter generalization of the Milnor number:

\begin{proposition}\label{PHN e mud}
    Let $(X,0)$ be a smoothable isolated determinantal singularity, $f:X\to\mathbb{C}$ be a function with an isolated singularity at the origin. Then $\mathrm{ind}_{\mathrm{PHN}}(\mathrm{df}, X, 0) =\mu_D(f).$ 
\end{proposition}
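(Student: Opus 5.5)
The plan is to combine the geometric description of the PHN-index in Proposition \ref{caracterization PHN} with the definition of $\mu_D(f)$, and to observe that both count the same Morse points. Since $(X,0)$ is a smoothable isolated determinantal singularity (so $s=1$ or $N<(m-s+1)(n-s+1)$), the essential smoothing is a genuine smoothing. For generic $A$ and $0<|t|\ll 1$, the fibre $X_A=F_A^{-1}(M^s_{m,n})$ is smooth inside $B_\epsilon$. Every point of it has rank exactly $s-1$, because $\tilde F_A$ avoids the strata $M^{s-1}_{m,n}$ for dimension reasons. Hence $\tilde X_{reg}=\tilde X=X_A\cap B_\epsilon$. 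We may therefore take $\tilde X = X_A$ as the essential smoothing in Definition \ref{essential smoothing}; it comes from the stabilization $\tilde F(x,t)=(F(x)+tA,t)$, and for generic $A$ this is transverse to $M^s_{m,n}$ for $t\neq 0$ by the Thom transversality theorem.

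By Proposition \ref{caracterization PHN}, $\mathrm{ind}_{PHN}(df,X,0)$ is the number of nondegenerate zeros on $\tilde X_{reg}=X_A$ of a generic deformation $\tilde\omega$ of $df$. I would choose the specific deformation $\tilde\omega = df + \sum a_i dx_i = d f_a$. Its zeros on $X_A$ are exactly the critical points of $f_a|_{X_A}$, and nondegenerate zeros of $d(f_a|_{X_A})$ are precisely Morse critical points. So if $a$ is generic enough that $f_a|_{X_A}$ is Morse on $X_A\cap B_\epsilon$, the count in Proposition \ref{caracterization PHN} equals $\#\Sigma(f_a|_{X_A})=\mu_D(f)$.

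The main obstacle is justifying that the exact $1$-form deformation $df_a$ is ``generic enough'' for Proposition \ref{caracterization PHN}, which is phrased for generic deformations of $1$-forms. Two things must be checked. First, by a Sard/Bertini-type argument (the family $\{f+l\}$ of linear perturbations restricted to the smooth manifold $X_A$), $f_a|_{X_A}$ is Morse for $a$ outside a proper analytic subset. Second, for $|a|,|t|$ small, no critical points escape through or arise near the boundary $X_A\cap S_\epsilon$. This holds because $df$ has no zeros on $X\cap S_\epsilon$ and the relevant sections of the dual Nash bundle vary continuously in $(t,a)$. Hence the obstruction class defining the PHN-index is unchanged along the path from a generic $1$-form deformation to $df_a$, so the signed count, with every sign $+1$ in the complex Morse case, equals the number of Morse points. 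Independence of the choice of $A$ and $a$ then follows from this homotopy invariance, matching the well-definedness of $\mu_D(f)$ established in \cite{nuno2013vanishing}.
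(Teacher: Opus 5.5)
Your proposal is correct and is essentially the paper's argument: apply Proposition~\ref{caracterization PHN} with the smoothing $\tilde X = X_A$ (so $\tilde X_{reg}=\tilde X$) and the linear deformation $df_a$, whose nondegenerate zeros on $X_A$ are exactly the Morse points counted by $\mu_D(f)$; your Sard-type and homotopy-invariance remarks spell out the genericity of $df_a$ and the admissibility of $X_A$, which the paper instead takes from the cited literature. The only slip is copied from the text of Section~4: the smoothability condition should read $N<(m-s+2)(n-s+2)=\operatorname{codim} M^{s-1}_{m,n}$, which is what your ``dimension reasons'' step actually uses (as written, $N<(m-s+1)(n-s+1)$ would give $X$ negative expected dimension).
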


\begin{proof}
    By Proposition \ref{caracterization PHN}, $ \mathrm{ind}_{\mathrm{PHN}}(\mathrm{df}, X, 0)$ is the number of nondegenerate singular points of a generic deformation $\tilde{\mathrm{df}}$ of the $1$-form $\mathrm{df}$ on the nonsingular part $\tilde{X}_{reg}$ of an essential smoothing $\tilde{X}$ of the singularity $(X, 0)$. Any generic deformation $\mathrm{f+t\cdot l} $ of $\mathrm{f}$ (see \cite[Theorem 3.2]{Ms1}) gives a generic deformation $\mathrm{d(f+t\cdot l)=df+t\cdot dl}$ of $\mathrm{df}$, where $|t|<<1$ and $l$ is a generic linear function. In particular,  $f_a(x_1,\ldots,x_n)=f(x_1,\ldots,x_n)+a_1x_1+\cdots+a_nx_n$
    provides a generic deformation $\mathrm{df_a}$ of $\mathrm{df}.$
    Since $(X,0)$ is smoothable, $\tilde{X}_{reg}=\tilde{X}$ and, by \cite[Lemma 5.1]{nuno2013vanishing}, one may take $\tilde{X}=X_A$. Hence,
    
    \begin{eqnarray*}
    \mathrm{ind}_{\mathrm{PHN}}(\mathrm{df}, X, 0) &=& \# \ \Sigma\tilde{df}|_{\tilde{X}_{reg}}=\# \ \Sigma\tilde{df_a}|_{X_A}\\ &=& \# \ \Sigma\tilde{f_a}|_{X_A}=\mu_D(f).
    \end{eqnarray*}
\end{proof}

In \cite{ament2016euler}, the authors provided a relation between the Euler obstruction of a function with an isolated singularity defined on a smoothable IDS using the determinantal Milnor number and the $d$-th polar multiplicity. More precisely,  they proved that \begin{center}
   $ \mathrm{Eu}_{f,X} (0)= (-1)^{\dim X}\big(\mu_D(f)-m_d(X,0)\big)$.
\end{center}

This formula can be viewed as the restriction of the formula developed in Theorem \ref{teorema2} to the smoothable case, because by Proposition \ref{PHN e mud}, $\mu_D(f)= \mathrm{ind}_{\mathrm{PHN}}(\mathrm{df}, X, 0)$, and by the same argument as in the proof of Corollary \ref{PHN and m_d}, $m_d(X,0)= \mathrm{ind}_{\mathrm{PHN}}(\mathrm{dl}, X, 0).$

In the following example, we apply all this to study a smoothable determinantal singularity of codimension 3, which is known to allow two different matrix structures with corresponding determinantal smoothings. This illustrates that we are using the respective (different) matrices and their perturbations, while the resulting invariant is independent of the chosen matrix type.

\begin{example}
    Consider the IDS  $X = F^{-1}(M_{2,4}^2)$, for $$F(x_1,\dots,x_5) = \begin{pmatrix} x_1 & x_2 & x_3 & x_4 \\ x_2 & x_3 & x_4 & x_5 \end{pmatrix}, $$ that is,  $$X=V(x_1x_3-x_2^2,x_1x_4-x_2x_3,x_1x_5-x_2x_4,x_2x_4-x_3^2,x_2x_5-x_3x_4,x_3x_5-x_4^2).$$
    
    Consider the function germ $f(x_1,\dots,x_5)=x_1^3+x_5+x_2x_3$ and the generic linear function $l(x_1,\ldots,x_5)=2x_1 + 5x_2 + 3x_3 - 5x_4 - 7x_5 .$ An (essential) smoothing of $X$ is $\tilde{X}=\tilde{F}^{-1}(M^2_{2,4})$, for  $$\tilde{F}(x_1,\dots,x_5) = \begin{pmatrix}x_1 & x_2 & x_3 & x_4 \\ x_2 & x_3 & x_4-\frac{1}{100} & x_5 \end{pmatrix}. $$

Using the function \verb|vector_space_dim| as before,

$$m_d(X,0)=\mathrm{ind}_{\mathrm{PHN}}(\mathrm{dl}, X, 0)=\dim_{\mathbb{C}}\Big({\mathcal{O}_{5}}\Big/{I_{\tilde{X}}+I(\tilde{X},\mathrm{dl})}\Big)  = 4$$ 
and $$\mu_D(f)=\mathrm{ind}_{\mathrm{PHN}}(\mathrm{df}, X, 0)=\dim_{\mathbb{C}}\Big({\mathcal{O}_{5}}\Big/{I_{\tilde{X}}+I(\tilde{X},\mathrm{df})}\Big) = 12.$$

Therefore, 
\begin{equation*}
Eu_{f,X} (0) = (-1)^{3}(\mathrm{ind}_{\mathrm{PHN}}(\mathrm{df}, X, 0) - \mathrm{ind}_{\mathrm{PHN}}(\mathrm{dl}, X, 0)) = (-1)^3(12 - 4) = -8.
\end{equation*}

Another representation of $X$ is $X = G^{-1}(M_{2,3}^2)$, for $$G(x_1,\dots,x_5) = \begin{pmatrix}
x_1 & x_2 & x_3 \\
x_2 & x_3 & x_4 \\
x_3 & x_4 & x_5
\end{pmatrix}. $$ 
    
It is easy to see that the ideals generated by the $2\times2$- minors of $G$ and of $F$ are equal. An (essential) smoothing of $X$ is $\tilde{X}=\tilde{G}^{-1}(M^2_{2,3})$, for  $$\tilde{G}(x_1,\dots,x_5) = \begin{pmatrix}
x_1 & x_2 & x_3 \\
x_2 & x_3-\frac{1}{4} & x_4 \\
x_3 & x_4 & x_5
\end{pmatrix}. $$

Using the function \verb|vector_space_dim| as before,

$$m_d(X,0)=\mathrm{ind}_{\mathrm{PHN}}(\mathrm{dl}, X, 0)=\dim_{\mathbb{C}}\Big({\mathcal{O}_{5}}\Big/{I_{\tilde{X}}+I(\tilde{X},\mathrm{dl})}\Big)  = 3$$ 
and $$\mu_D(f)=\mathrm{ind}_{\mathrm{PHN}}(\mathrm{df}, X, 0)=\dim_{\mathbb{C}}\Big({\mathcal{O}_{5}}\Big/{I_{\tilde{X}}+I(\tilde{X},\mathrm{df})}\Big) = 11.$$

Therefore, 
\begin{equation*}
Eu_{f,X} (0) = (-1)^{3}(\mathrm{ind}_{\mathrm{PHN}}(\mathrm{df}, X, 0) - \mathrm{ind}_{\mathrm{PHN}}(\mathrm{dl}, X, 0)) = (-1)^3(11 - 3) = -8.
\end{equation*}
\end{example}

We conclude by noting that the PHN‑index of a $1$-form serves as a generalization of the Milnor number in the presence of rigid singularities, in a manner consistent with previous extensions of the Milnor number in settings without rigidity.

\section*{Acknowledgements}

The authors are grateful to Matthias Zach and Marcel Salmon for their fruitful discussions and remarks.

\bibliography{references}
\bibliographystyle{amsplain}

\end{document}